\newtheorem{lemma}{Lemma}
\newtheorem{prop}{Proposition}
\newtheorem{thm}{Theorem}
\newcommand{\lk}{\mathrm{lk}}
\title[Linking and knotting in digraphs]{Intrinsic linking and knotting are arbitrarily complex in directed graphs}
\author{Thomas W.~Mattman}
\address{Department of Mathematics and Statistics,
California State University, Chico,
Chico, CA 95929-0525}
\email{TMattman@CSUChico.edu}
\author{Ramin Naimi}
\author{Benjamin Pagano}
\address{Department of Mathematics,
Occidental College, Los Angeles, CA 90041}
\email{rnaimi@oxy.edu}
\email{bpagano@oxy.edu}
\subjclass[2010]{Primary 05C10, Secondary 57M15, 57M25, 05C20, 05C35 }
\keywords{intrinsically knotted graph, intrinsically linked graph, directed graph, spatial graph}
\begin{document}

\begin{abstract}
Fleming and Foisy \cite{ff} recently proved the existence of a digraph whose every embedding contains a $4$-component link, and left open the possibility that a directed graph with an intrinsic $n$-component link might exist. We show that, indeed, this is the case. In fact, much 
as Flapan, Mellor, and Naimi~\cite{FlaMelNai} show for graphs, 
knotting and linking are arbitrarily complex in directed graphs. Specifically, we 
prove the analog for digraphs of the main theorem of their paper:
for any $n$ and $\alpha$, every embedding of a sufficiently large complete digraph in 
$\mathbb{R}^3$ contains an oriented link with components $Q_1, \ldots, Q_n$ such that, for every $i \neq j$, 
$|\lk(Q_i,Q_j)| \geq \alpha$ and $|a_2(Q_i)| \geq \alpha$, where  $a_2(Q_i)$ denotes the 
second coefficient of the Conway polynomial of $Q_i$.
\end{abstract}

\date \today

\maketitle

\section{Introduction}

Fleming and Foisy \cite{ff} recently proved the existence of a digraph whose every embedding contains a $4$-component link, and left open the possibility that a directed graph with an intrinsic $n$-component link might exist. We show that, indeed, this is the case. In fact, much 
as Flapan, Mellor, and Naimi~\cite{FlaMelNai} show for graphs, 
knotting and linking are arbitrarily complex in directed graphs. Specifically, we 
prove the analog for digraphs of the two main theorems of their paper.

Before stating the results, we introduce some notation.
For graph $G$, the \emph{symmetric digraph} $\overline{DG}$ is obtained by replacing each edge $v_iv_j$ with two directed edges, $v_iv_j$ and $v_jv_i$. 
For cycle $C$ in a digraph, let $p_1, \dots, p_\delta$ in $C$ be maximal consistently directed paths (no $p_i$
is a subpath of a longer consistently directed path in $C$). Then $\delta$ is the \emph{directionality} of $C$. Thus, a consistently directed cycle is $1$--directional.
Following \cite{FlaMelNai}, we define the \emph{linking pattern} of a link of $n$ components, $L_1, \dots, L_n$, as the weighted graph on vertices $v_1, \dots, v_n$
where $| \lk(L_i,L_j)|$ (if nonzero) is the weight of $v_iv_j$. When the linking number is zero, there is no edge.
The \emph{mod 2 linking pattern} instead carries the weights $\omega(L_i,L_j) = \lk(L_1,L_j) \bmod 2$.

\begin{thm}
Let $\lambda$, $\delta \in \mathbb{N}$ with $\delta$ even or $1$.  For every $n \in \mathbb{N}$, there is a digraph $\overline{DG}$ such that every embedding of 
$\overline{DG}$ in 
$\mathbb{R}^3$ contains a link whose weighted linking pattern is $K_n$ with every weight at least $\lambda$ and every component $\delta$-directional.
\end{thm}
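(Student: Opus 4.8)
The plan is to bootstrap from the Flapan–Mellor–Naimi result for ordinary graphs. Their theorem gives, for any $n$ and any target linking weight, a complete graph $K_N$ every embedding of which contains an $n$-component link with linking pattern $K_n$ and all weights at least $\lambda$. The natural strategy is: take such a graph $G = K_N$, pass to the symmetric digraph $\overline{DG}$, and observe that any embedding of $\overline{DG}$ restricts to an embedding of the underlying graph $G$, hence already contains an $n$-component link $L_1,\dots,L_n$ with the desired linking pattern. The remaining issue is purely about \emph{directionality}: the cycles $L_i$ coming from the FMN argument are cycles in the underlying undirected graph, and a priori they need not be realizable as $\delta$-directional cycles in $\overline{DG}$. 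So the real content is a gadget/replacement argument that upgrades an undirected link to one whose components each have directionality exactly $\delta$.

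The key step is a local edge-replacement (subdivision-type) construction. For the consistently directed case $\delta = 1$: every edge of $G$ is a double edge in $\overline{DG}$, so a cycle $v_{i_1}v_{i_2}\cdots v_{i_k}v_{i_1}$ in $G$ can be traversed using, at each step, whichever of the two antiparallel directed edges points the correct way; this makes every cycle $1$-directional automatically, so the $\delta=1$ case is essentially immediate. For even $\delta = 2m$: I would build a ``directionality gadget'' by subdividing each edge of $G$ enough times and, on each subdivided edge, choosing the orientations of the antiparallel pairs so that traversing that edge in either direction contributes a prescribed number of direction-reversals. Concretely, replace each edge by a path whose constituent directed edges alternate orientation in a controlled pattern; then when the FMN cycle uses such a path, it accrues a fixed, even number of maximal consistently directed subpaths, and by distributing these contributions around the cycle we can arrange the total directionality of each $L_i$ to be exactly $\delta$ (an even number, or $1$). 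One must check that subdividing edges does not disturb the FMN conclusion — but subdivision is a homeomorphism of the spatial graph, so linking numbers and linking patterns are unchanged, and a cycle of $G$ corresponds to a cycle of the subdivided graph with the same link type.

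In more detail, the steps in order are: (1) invoke the FMN theorem to get $N = N(n,\lambda)$ and the guarantee that every embedding of $K_N$ contains an $n$-component link with linking pattern $K_n$, all weights $\geq \lambda$; (2) replace $K_N$ by a subdivided graph $G$ in which each original edge becomes a path of some fixed length $\ell = \ell(\delta)$, noting every embedding of $G$ still contains such a link (subdivision invariance); (3) form $\overline{DG}$ and specify, path-by-path, an orientation pattern on the antiparallel pairs so that each subdivided edge, traversed in \emph{either} direction, breaks into a prescribed number of maximal consistently directed paths; (4) given an embedding of $\overline{DG}$, restrict to the underlying $G$, extract the FMN link $L_1,\dots,L_n$, and compute the directionality of each $L_i$ from the per-edge contributions, verifying it equals $\delta$; (5) conclude. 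The main obstacle I expect is step (3)–(4): engineering a single edge-gadget whose directionality contribution is \emph{the same regardless of which way the cycle traverses it} and whose contributions sum to a controllable total around \emph{every} cycle that FMN might hand us (we do not get to choose which cycles appear). A clean way around this is to make each gadget contribute a fixed constant $c$ in each direction and then also control the number of edges in each FMN cycle — or, more robustly, to design the gadget so that a traversal of $G$-edge $v_iv_j$ always contributes exactly the directionality needed ``locally'' and the vertices of $G$ contribute nothing, so that the per-cycle total is forced. Handling parity (why $\delta$ must be even or $1$) falls out naturally: a closed walk that is not consistently oriented must reverse direction an even number of times, so only even directionalities (plus the degenerate value $1$) are achievable.
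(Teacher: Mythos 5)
Your reduction to the undirected Flapan--Mellor--Naimi theorem has a fatal gap at the point where you pass between undirected and directed cycles. In a spatial embedding of the symmetric digraph $\overline{DG}$, the two antiparallel directed edges $v_iv_j$ and $v_jv_i$ are embedded as two \emph{distinct arcs} in $\mathbb{R}^3$, and these arcs may be embedded completely differently (one of them may wind around the other components arbitrarily). Consequently, an embedding of $\overline{DG}$ does not canonically restrict to an embedding of $G$: you must choose one arc from each antiparallel pair, and the FMN link you extract lives in that particular choice of arcs. When you then ``traverse the cycle using whichever of the two antiparallel edges points the correct way'' to make it $1$-directional, or re-route it through your gadgets to make it $\delta$-directional, you replace some of its arcs by their antiparallel partners and obtain a \emph{different closed curve in space}, whose linking numbers with the other components are no longer controlled by FMN. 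This is precisely the difficulty the theorem is about, and it cannot be dismissed as a combinatorial relabeling. Your subdivision gadget has the additional problem that in a \emph{symmetric} digraph you do not get to ``choose the orientations of the antiparallel pairs'' --- both orientations are present on every sub-edge by definition --- so the gadget forces nothing; and if you instead delete some orientations you are no longer constructing the symmetric digraph $\overline{DG}$ that the statement asserts, and you would further need to guarantee that the FMN cycles are even traversable as directed cycles there.

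For contrast, the paper does not reduce to the undirected theorem: it rebuilds the whole FMN induction in the directed category. The starting point is an orientation of $K_6$ in which every $3$-cycle is automatically $2$-directional, so the Conway--Gordon count directly yields linked $2$-directional cycles (Lemma~\ref{newmanylinks}); every subsequent cycle of prescribed directionality is then assembled from arcs of these already-directed cycles together with connector edges whose directions are chosen to place the direction changes where needed, while the linking numbers are tracked throughout via symmetric differences (Lemmas~\ref{bigZ} and~\ref{bipar}) and an inductive construction of the patterns $H(n,m)$. Your closing parity observation (why $\delta$ must be even or $1$) is correct, but the main construction does not go through.
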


\begin{thm}
For every $n, \alpha \in \mathbb{N}$, there is a complete digraph $\overline{DK_r}$ such that every embedding of $\overline{DK_r}$ in $\mathbb{R}^3$ contains a link with $1$-directional components $Q_1, \dots, Q_n$ such that for every $i\neq j$, $|\lk(Q_i,Q_j)| \geq \alpha$ and $|a_2(Q_i)| \geq \alpha$. 
\end{thm}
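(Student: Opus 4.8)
The plan is to adapt Flapan, Mellor, and Naimi's proof of the undirected analog in \cite{FlaMelNai}, carrying it out throughout with consistently directed cycles, and then to obtain the complete digraph for free. Since adjoining arcs to a digraph only enlarges the family of subdigraphs -- and hence of links -- that appear in every embedding, it suffices to produce \emph{some} digraph $\overline{DG}$ all of whose embeddings contain a link of $1$-directional cycles $Q_1,\dots,Q_n$ with $|\lk(Q_i,Q_j)|\ge\alpha$ and $|a_2(Q_i)|\ge\alpha$ for all $i\neq j$; then $\overline{DK_r}$ with $r=|V(G)|$ works, since every embedding of $\overline{DK_r}$ restricts to an embedding of $\overline{DG}$. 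It is worth noting why one cannot simply invoke \cite{FlaMelNai} for the underlying $K_r$ and re-orient the cycles it produces: already a theta graph admits no orientation making all three of its cycles consistently directed, so, inside $\overline{DK_r}$, no embedded subdivision of $K_r$ has all of its cycles $1$-directional, and the $1$-directionality must be engineered from the start, exactly as in Theorem 1.

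The linking half is then immediate from Theorem 1 in the case $\delta = 1$: applied with a large number of components $m = m(n,\alpha)$ (in place of $n$) and a large weight bound $\lambda = \lambda(n,\alpha)$, it produces a digraph every embedding of which contains $1$-directional cycles $P_1,\dots,P_m$ with $|\lk(P_i,P_j)| \ge \lambda$ for all $i\neq j$. What remains is to additionally force $n$ well-chosen cycles to be knotted with $|a_2| \ge \alpha$. For this I would use the device by which \cite{FlaMelNai} pass from linking to knotting, via the Conway skein relation $\nabla(L_+)-\nabla(L_-) = z\,\nabla(L_0)$: at a crossing of a knot $K$ it yields $a_2(K_+)-a_2(K_-) = \lk(L_0)$, where $L_0$ is the oriented smoothing, so that if a subdigraph contains, in every embedding, a pair of consistently directed cycles $K_+,K_-$ agreeing outside a ball, differing there by a crossing change, and whose smoothing is a two-component link of linking number at least $2\alpha$ in absolute value, then $|a_2(K_+)| \ge \alpha$ or $|a_2(K_-)| \ge \alpha$. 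I would therefore attach to the linking-pattern digraph a ``knotting gadget'' for each intended component -- a subdigraph assembled from directed arcs routed through private auxiliary vertices -- arranged so that (i) the smoothing $L_0$ is one of the highly linked pairs supplied by Theorem 1, (ii) both $K_+$ and $K_-$ are $1$-directional, and (iii) the one of $K_+,K_-$ carrying the large $|a_2|$ can be identified with a component $Q_i$ that is still highly linked with the remaining $Q_j$. Taking $m$ and $\lambda$ large enough in terms of $n$ and $\alpha$, and then completing the resulting digraph, gives the desired $\overline{DK_r}$.

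The hard part will be carrying out (i)--(iii) simultaneously: the prescribed linking pattern, the clasp (crossing-change) regions, and the band regions must all be realized by arcs whose directions are compatible with a single consistent traversal of each cycle in sight -- the $Q_i$, the auxiliary $K_\pm$, and their smoothings -- with no freedom to repair orientations afterward, and the relevant skein-theoretic bookkeeping must be redone for these directed cycles. One must also check that overlaying a knotting gadget on the linking-pattern construction destroys neither property, so that the final $Q_1,\dots,Q_n$ genuinely satisfy $|\lk(Q_i,Q_j)| \ge \alpha$ and $|a_2(Q_i)| \ge \alpha$ at once. Once the directed routings are in place, the remaining parameter bookkeeping is routine.
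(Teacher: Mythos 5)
Your reduction to finding \emph{some} $\overline{DG}$ and then completing it is fine, and your instinct that the knotting must come from the skein relation $a_2(K_+)-a_2(K_-)=\lk(L_0)$ applied to highly linked pairs is the right germ of the idea. But the execution plan has a genuine gap, and it is exactly the part you defer as ``the hard part.'' An abstract digraph cannot force two of its cycles to ``agree outside a ball and differ there by a crossing change'' in \emph{every} embedding: crossing changes are properties of a particular embedding, not of the combinatorial structure, so a ``knotting gadget'' of the kind you describe in (i)--(iii) is not realizable. What \cite{FlaMelNai} actually do --- and what the paper imports in Lemmas~\ref{b6}, \ref{d4}, and \ref{final} --- is to avoid prescribing crossings altogether: they take a cycle $W'$ meeting four of the already-linked cycles $C_1,\dots,C_4$ each in a single arc, and exploit an identity expressing $a_2(W'\nabla\epsilon_1C_1\nabla\cdots\nabla\epsilon_4C_4)$, summed with signs over the $16$ choices of $\epsilon_i\in\{0,1\}$, as a product of linking numbers of cycles \emph{already present} in the embedding; this forces one of the $16$ symmetric-difference cycles to have $|a_2|\ge\lambda^2/16$. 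The skein relation enters only in proving that identity, not via an engineered clasp.

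This matters for your directionality bookkeeping as well. You start from Theorem~1 with $\delta=1$, but the $\nabla$ machinery needs the input cycles to be $2$-directional: a $2$-directional cycle $B_i$ has two direction-change vertices $x_i,y_i$ with both arcs directed from $x_i$ to $y_i$, which is precisely what lets one splice arcs of several $B_i$'s into a single \emph{consistently directed} cycle $W'$ and keep all of $W'\nabla\epsilon_1C_1\nabla\cdots\nabla\epsilon_4C_4$ $1$-directional. Symmetric differences of $1$-directional cycles meeting in an arc are in general not consistently orientable. The paper therefore invokes Theorem~1 with $\delta=2$ to get $m=f^n(n)$ pairwise linked $2$-directional components (with $\lambda=\max\{\alpha,4\sqrt{\alpha}\}$ so that $\lambda^2/16\ge\alpha$), and only produces the $1$-directional $Q_i$ at the last step via Lemma~\ref{final}. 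To repair your argument you would need to replace the gadget by this symmetric-difference mechanism and change your starting directionality from $1$ to $2$.
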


Not just the statements of our theorem are similar to \cite{FlaMelNai}, but the proofs as well. We prove Theorem~1 in the next section and 
Theorem~2 in Section 3.

\section{Intrinsic Linking}

In this section we prove Theorem~1. After a couple of introductory lemmas, we follow the same path as in~\cite{FlaMelNai}.
Throughout this paper, indices are cyclic;
e.g., given say $x_i$ with $1 \le i \le n$ (or $0 \le i \le n$),
$x_{n+1}$ is to be understood as $x_1$ ($x_0$); 
and, more generally, for $i> n$,
$x_{i}$ is to be understood as $x_{i-n}$ ($x_{i -n-1}$).

\begin{lemma}
\label{newmanylinks}
Every spatial digraph $\overline{DK_{6m}}$  
contains at least $m$ pairwise disjoint 2-component links,
all with odd linking numbers,
such that all their components are $2$-directional.
\end{lemma}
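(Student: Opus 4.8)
The plan is to reduce the statement to the Conway--Gordon theorem on $K_6$. Recall that in any spatial embedding of $K_6$ the sum, over the ten unordered pairs of vertex-disjoint triangles, of the mod-$2$ linking numbers equals $1$; in particular some such pair $T_1,T_2$ has $\lk(T_1,T_2)$ odd. I would first record, alongside this, the elementary directionality observation: any triangle $abc$ of $\overline{DG}$ may be realized as a $2$-directional cycle --- for instance by selecting the directed edges $a\to b$, $b\to c$, $a\to c$, which makes $a$ a source and $c$ a sink, so the cycle decomposes into exactly two maximal consistently directed paths ($a\to b\to c$ and $a\to c$). This is the one place the defining feature of $\overline{DG}$ is used: both directed edges lie over each undirected edge, so each of the three edge directions is ours to choose.

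Now take an arbitrary embedding of $\overline{DK_{6m}}$ and partition its $6m$ vertices into blocks $V_1,\dots,V_m$ of six vertices each; each block spans an embedded copy of $\overline{DK_6}$. Forgetting edge directions in block $V_k$ gives an embedded $K_6$, so Conway--Gordon supplies vertex-disjoint triangles $T_1^k,T_2^k\subseteq V_k$ with $\lk(T_1^k,T_2^k)$ odd. Reversing the orientation of a link component only flips the sign of a linking number, so ``odd linking number'' does not depend on how the triangles are oriented; hence, using the observation above, I can orient the six edges of $T_1^k\cup T_2^k$ so that both triangles are $2$-directional cycles in $\overline{DK_6}$, and therefore in $\overline{DK_{6m}}$.

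Finally, the $m$ two-component links $\{T_1^k,T_2^k\}$, $k=1,\dots,m$, are supported on the pairwise disjoint vertex sets $V_1,\dots,V_m$, hence are pairwise disjoint; each has odd linking number and $2$-directional components, which is exactly what is claimed. I do not anticipate a real obstacle here: the only points requiring care are that the triangles can be oriented to have directionality $2$ without ceasing to be cycles (handled by the explicit choice above) and that the parity of the linking number is independent of those orientation choices (immediate from sign-reversal under reversal of a component).
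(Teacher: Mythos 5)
Your proof is correct and follows essentially the same route as the paper: apply Conway--Gordon to each of the $m$ disjoint copies of $K_6$ and use the freedom of edge directions in the symmetric digraph to make both triangles $2$-directional. The only cosmetic difference is that the paper fixes one transitive orientation of all of $K_6$ up front (under which \emph{every} triangle is automatically $2$-directional), whereas you orient just the two triangles produced by Conway--Gordon; both work for the same reason.
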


\begin{proof}
Take an undirected graph $K_6$ with vertices $v_1,\dots,v_6$ and form the digraph $DG$ by 
orienting the edges such that $v_iv_j$ is directed from $v_i$ to $v_j$ when $i < j$.
Note that $DG$ contains no $1$-directional cycles.

Additionally, notice that all $3$-cycles in $DG$ must be $2$-directional.
As argued in~\cite{cg,s}, any embedding of $K_6$ contains a pair of $3$-cycles with odd linking number.
Then, any embedding of $DG$ must contain a link with $2$-directional 
components. As $DG$ is a subgraph of $\overline{DK_6}$
and  $\overline{DK_{6m}}$ contains $m$ distinct copies of $\overline{DK_6}$, 
every embedding of $\overline{DK_{6m}}$ in $\mathbb{R}^3$ contains
$m$ pairwise disjoint links with odd linking number and $2$-directional components.
\end{proof}

\begin{lemma}
\label{lem-01mtx}
Let $M$ be an $m \times n$ matrix with entries in $\mathbb{Z}_2$ where every column of $M$ contains at least one $1$.
For every $n$ and $m$, there exists a vector $v \in \mbox{ row}(M)$  for which over $\frac{n}{2}$ of the entries are $1$'s.
\end{lemma}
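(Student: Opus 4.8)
The plan is to use a first-moment (averaging) argument over the row space. Let $r_1, \dots, r_m \in \mathbb{Z}_2^n$ be the rows of $M$. For each subset $S \subseteq \{1, \dots, m\}$ form the vector $v_S = \sum_{i \in S} r_i$, computed in $\mathbb{Z}_2^n$ (with $v_\emptyset = 0$); each $v_S$ lies in $\mathrm{row}(M)$. There are $2^m$ such vectors, counted with multiplicity, and the goal is to show that the average number of $1$-entries among them is exactly $n/2$, so that some $v_S$ exceeds the average.

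First I would fix a column index $j$ and count how many subsets $S$ make the $j$-th coordinate of $v_S$ equal to $1$. By hypothesis there is a row index $i_0$ with $M_{i_0 j} = 1$. The map $S \mapsto S \,\triangle\, \{i_0\}$ is a fixed-point-free involution on the collection of all $2^m$ subsets, and for each pair $\{S,\, S \,\triangle\, \{i_0\}\}$ the $j$-th coordinates of $v_S$ and $v_{S \triangle \{i_0\}}$ differ (they differ by $M_{i_0 j} = 1$), so exactly one of the two equals $1$. Hence exactly $2^{m-1}$ subsets give a $1$ in coordinate $j$.

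Summing this over the $n$ columns, the total number of $1$-entries across all $2^m$ vectors $v_S$ is $n \cdot 2^{m-1}$, so the average number of $1$-entries per vector is $n/2$. Since $v_\emptyset = 0$ contributes $0$ ones — strictly fewer than the average $n/2$ (here $n \ge 1$; note $m \ge 1$ is forced by the column hypothesis) — not every $v_S$ can have at most $n/2$ ones, so some subset $S$ yields a vector $v = v_S \in \mathrm{row}(M)$ with strictly more than $n/2$ entries equal to $1$.

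There is essentially no obstacle here; the only point requiring care is obtaining the \emph{strict} inequality ``over $n/2$''. That is exactly why one averages over all $2^m$ subsets, including the empty one, rather than over the nonzero elements of $\mathrm{row}(M)$: the zero vector pulls the average down and thereby forces some other vector strictly above it.
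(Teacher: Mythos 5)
Your proof is correct, and it takes a genuinely different route from the paper's. The paper argues by induction on $n$: it takes a vector $v_0$ of maximal weight in the row space, assumes for contradiction that $\|v_0\| = k \le n/2$, splits the matrix into the $k$ columns where $v_0$ is $1$ and the remaining $n-k$ columns, shows by an exchange argument that every row-space vector has at least as many $1$'s on the left block as on the right, and then applies the inductive hypothesis to the right block to contradict maximality. You instead run a first-moment argument over all $2^m$ subset sums of the rows: the involution $S \mapsto S \,\triangle\, \{i_0\}$ shows each column is $1$ in exactly half of these vectors, so the average weight is exactly $n/2$, and the zero vector $v_\emptyset$ sitting strictly below the average forces some $v_S$ strictly above it. Your argument is non-inductive, shorter, and actually proves the sharper statement that the average weight over the (multiset of) subset sums is exactly $n/2$; it also sidesteps the small bookkeeping the paper's induction requires (checking that the right-hand submatrix still satisfies the column hypothesis, and that its row space is the projection of $\mathrm{row}(M)$). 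The paper's approach, on the other hand, produces the witness as a maximizer and localizes where its $1$'s sit, which is not needed for the application here. Your handling of the strictness via the zero vector, and your explicit notes that $n \ge 1$ and $m \ge 1$ are forced, close the only points where such an averaging argument could slip.
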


\begin{proof}
Let $\|v\|$ denote the number of $1$'s in a vector $v$. We proceed by induction on $n$. If $n=1$, the statement is obvious.

Fix $m>0$ and assume that for all $ 1 \leq l < n$, the statement holds for every $m \times l$ matrix.
Let $m$ be an $m \times n$ matrix and suppose $v_0$ is a vector in the row space that maximizes $\|v_0\|$. 
For a contradiction, assume that $\|v_0 \|=k \leq \frac{n}{2}$.
Without loss of generality, the first $k$ entries in $v_0$ are $1$, and the rest are all $0$.
Divide $M$ into the two matrices $M_L$ and $M_R$ where the $m \times k$ matrix $M_L$ is the first $k$ columns of $M$ and 
the $m \times n-k$ $M_R$ is the remaining $n-k$ columns of $M$.
Similarly, split vectors $x \in \mbox{row}(M)$ into vectors $x_L$ and $x_R$ of lengths $k$ and $n-k$ respectively.
For every $x$, $\|x_L\| \geq \|x_R\|$, because otherwise $\|v_0 + x\| > \|v_0\|$, contradicting the maximality of $\|v_0\|$.
By induction, there is an $x$ such that $\|x_R\| > \frac{n-k}{2}$. This would imply $\|x_L\| > \frac{n-k}{2}$, and $\|x\| > n-k \geq k = \|v_0|$,
a contradiction.
\end{proof}

\begin{lemma}
\label{bigZ}
Suppose a spatial digraph $\overline{DK_p}$ 
contains links with components $J_1, \cdots , J_{2n}$ and  $X_1, \cdots X_{2n}$ 
such that $J_i$ is 2-directional and $\omega (J_i,X_i) = 1$ for every $i \leq n$.
Then $\overline{DK_p}$ contains a 1-directional cycle $Z$ in $\overline{DK_p}$ with vertices on $J_1 \cup \cdots \cup J_{2n}$
such that for some  $I \subseteq \{1, \cdots, 2n\}$ with $| I | \ge n/2$, 
$\omega (Z, X_i) = 1$ for all $i \in I$.
Furthermore, for every  $\delta \ge 1$, $Z$ can be chosen to be $2\delta$-directional
if $\overline{DK_p}$ contains at least
$2\delta-2$  vertices  disjoint from all $J_i$ and  $X_i$.
\end{lemma}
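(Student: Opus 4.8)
The plan is to follow \cite{FlaMelNai}: first use the counting lemma to locate a $\mathbb{Z}_2$-combination of the $J_i$ that links mod $2$ more than half of the $X_i$, and then promote that combination to a \emph{single} cycle of the prescribed directionality, exploiting the $2$-directional structure of the $J_i$ together with the abundance of directed edges in the complete symmetric digraph $\overline{DK_p}$. The only topological input is that, for a fixed spatial graph and a fixed cycle $X$ disjoint from it, $\omega(\,\cdot\,,X)$ is additive under symmetric difference of even subgraphs disjoint from $X$; equivalently $\omega(C_1\triangle C_2,X)=\omega(C_1,X)+\omega(C_2,X)$. So form the $\mathbb{Z}_2$-matrix $M$ with $M_{ij}=\omega(J_i,X_j)$. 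By hypothesis $M_{ii}=1$, so every column of $M$ contains a $1$, and Lemma~\ref{lem-01mtx} supplies a nonempty $S\subseteq\{1,\dots,2n\}$ with $\omega\bigl(\sum_{i\in S}J_i,X_j\bigr)=\sum_{i\in S}\omega(J_i,X_j)=1$ for more than half of the indices $j$.

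Next I convert the disjoint union $\sum_{i\in S}J_i$ into a connected cycle. Writing $S=\{i_1,\dots,i_k\}$, each $2$-directional $J_{i_\ell}$ splits as $J_{i_\ell}=A_{i_\ell}\cup B_{i_\ell}$, with $A_{i_\ell},B_{i_\ell}$ directed paths from the source $s_{i_\ell}$ to the sink $t_{i_\ell}$. Splice $A_{i_1},\dots,A_{i_k}$ into a $1$-directional cycle $Z^A$ by joining $t_{i_\ell}$ to $s_{i_{\ell+1}}$ (indices mod $k$) with a directed edge of $\overline{DK_p}$, and splice $B_{i_1},\dots,B_{i_k}$ into $Z^B$ using the \emph{same} connecting edges; both cycles have all their vertices on $J_1\cup\dots\cup J_{2n}$. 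The connecting edges cancel and $A_{i_\ell}\triangle B_{i_\ell}=J_{i_\ell}$, so $Z^A\triangle Z^B=\sum_{i\in S}J_i$, whence $\omega(Z^A,X_j)+\omega(Z^B,X_j)=\sum_{i\in S}\omega(J_i,X_j)$. For every good $j$ this sum is $1$, so exactly one of $Z^A,Z^B$ links $X_j$ mod $2$; by pigeonhole one of the two links mod $2$ at least half of the good $X_j$, that is, at least $n/2$ of them. That cycle is the desired $Z$.

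For the last clause, recall that a cycle with $\delta$ sources and $\delta$ sinks (and no other vertex where the orientation reverses) is $2\delta$-directional, so it suffices to inject $\delta$ source--sink pairs into $Z$. The device is to carry out every alteration \emph{identically} on $Z^A$ and on $Z^B$: then $Z^A\triangle Z^B$ remains $\sum_{i\in S}J_i$, so the pigeonhole step — which never inspects $\omega(Z^A,X_j)$ or $\omega(Z^B,X_j)$ individually — still applies. Two moves are available. (i) At one element of $S$, traverse its arc from $t$ to $s$ rather than from $s$ to $t$, redirecting its two neighbouring connecting edges accordingly; this creates one source and one sink and uses no new vertex, but can be applied at most once per element of $S$. (ii) Replace a connecting edge $t\to s$ by the length-three directed walk $t\to x,\;y\to x,\;y\to s$ through two fresh vertices $x,y$; this also produces one source and one sink, and, being performed identically in $Z^A$ and $Z^B$, leaves $Z^A\triangle Z^B$ intact. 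Using move (i) at $\min(|S|,\delta)$ elements and move (ii) the remaining $\max(0,\delta-|S|)$ times makes the cycle exactly $2\delta$-directional; since $|S|\ge 1$, at most $\delta-1$ detours occur, consuming at most $2\delta-2$ vertices disjoint from the $J_i$ and $X_i$.

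I expect the genuine difficulty to be exactly this last step: manufacturing one cycle — rather than the disjoint union the matrix lemma provides — with a prescribed directionality, without wrecking the linking numbers, since naively merging components or locally reversing edges perturbs $\omega(\,\cdot\,,X_j)$ by uncontrolled amounts. Keeping the pair $Z^A,Z^B$ coupled so that their symmetric difference is always $\sum_{i\in S}J_i$ is what sidesteps this. The routine-but-careful residue is checking that each of moves (i) and (ii) changes the source/sink count by precisely one, and that the bound $2\delta-2$ is forced only in the extreme case $|S|=1$.
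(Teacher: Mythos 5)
Your proof is correct, and it rests on the same ingredients as the paper's --- Lemma~\ref{lem-01mtx}, the decomposition of each $2$-directional $J_i$ into two directed paths sharing a source and a sink, splicing with directed edges of the complete symmetric digraph, and additivity of $\omega$ over $\mathbb{Z}_2$ symmetric difference --- but the mechanism for extracting the final cycle is genuinely different. The paper first builds a single $1$-directional cycle $C$ threading \emph{all} $2n$ of the $J_i$ (one directed path $q_i$ from each, joined by edges $w_iu_{i+1}$), observes that $C\nabla J_i$ merely swaps $q_i$ for the other directed path of $J_i$ and hence remains one cycle of unchanged directionality, and then either takes $Z=C$ outright or corrects $C$ by the rows supplied by Lemma~\ref{lem-01mtx}, closing with a small counting argument ($C$ links fewer than $n/2$ of the $X_j$ while the row sum has more than $n$ ones, so the correction links more than $n/2$). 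You instead thread only the components indexed by $S$, build the coupled pair $Z^A,Z^B$ with $Z^A\triangle Z^B=\sum_{i\in S}J_i$, and finish by pigeonhole; this eliminates the case analysis and makes the parity bookkeeping transparent, at the cost of having to insist that every later modification be performed identically on both cycles (which you do). Your directionality upgrade also differs: the paper concentrates all $2\delta$ direction changes on one closing path through the $2\delta-2$ spare vertices, whereas you distribute them, reusing the sources and sinks of the $J_i$ via move~(i) and spending spare vertices only when $\delta>|S|$. One small imprecision to repair: move~(ii) as written consumes a connecting edge and there are only $|S|$ of those, so when $\delta>2|S|$ (e.g.\ $|S|=1$ with $\delta$ large) you must either nest the detours or, as the paper does, replace a single connecting edge by one alternating path through all $2(\delta-|S|)\le 2\delta-2$ fresh vertices; either reading keeps the vertex count within the hypothesis, so this is cosmetic rather than a gap.
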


\begin{figure}[h]
\includegraphics[width = 13cm]{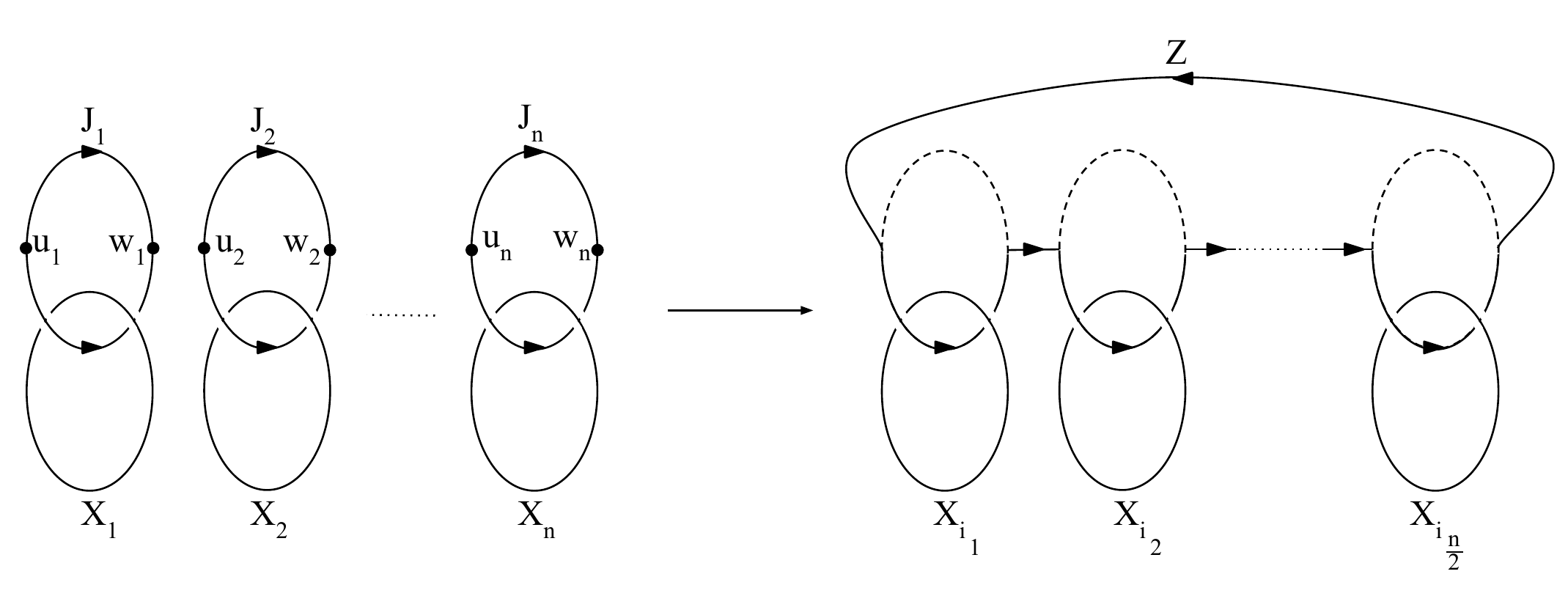}
\caption{Illustration of Lemma \ref{bigZ}}
\end{figure}

\begin{proof}
Since each $J_i$ is 2-directional,
it has exactly two vertices
where ``direction changes'' on $J_i$,
i.e., the two edges at each of these vertices 
are directed either both toward or both away from that vertex.
We label these two vertices $u_i$ and $w_i$,
so that both paths on $J_i$ between $u_i$ and $w_i$
are directed from $u_i$ toward $w_i$.
Let $q_i$ be one of these two directed paths;
let $w_i u_{i+1}$ denote the directed edge from $w_i$ to $u_{i+1}$;
and let $C = \bigcup q_i \cup \bigcup w_i u_{i+1}$.

Note that $C$ is a 1-directional cycle.
If we want the cycle $Z$ in the conclusion of the lemma to be $2$ directional,
we make $C$ $2$-directional by replacing the edge $w_{2n}u_1$ in $C$ with the edge $u_1w_{2n}$.
And if we want  $Z$ to be $2\delta$ directional with $\delta \ge 2$,
we replace the edge $w_{2n}u_1$ in $C$ with 
a path between $w_{2n}$ and $u_1$ 
that goes through the  $2\delta - 2$ extra vertices given in the hypothesis of the lemma,
such that the path changes direction at each of those vertices
and also at $w_{2n}$ and $u_1$.

If $\omega(C,X_i) = 1$ for at least $n/2$ of the $X_j$'s,
we let $Z = C$, and we are done.
Otherwise, we construct $Z$ as follows.
Let $M$ be the matrix with entries $M_{ij} = \omega(J_i,X_j)$.
Since $M_{ii} = 1$ for all $i$,
by Lemma~\ref{lem-01mtx},
there exist rows $i_1, \cdots ,i_k$ of $M$ whose sum contains greater than $n$ 1's.
Let  $Z = C\nabla  J_{i_1}\cdots \nabla J_{i_k}$.
Since $C$ links fewer than $n/2$ of the $X_j$'s,
while the sum of the rows $i_1, \cdots ,i_k$ contains greater than $n$ 1's,
it follows that $Z$ links at least $n/2$ of the $X_j$'s, as desired.
\end{proof}

Recall from \cite{FlaMelNai} that a \emph{generalized mod 2 keyring link} is one whose mod 2 linking pattern includes an $n$-star.

\begin{prop}
\label{prop1}
Let $n, \delta,$ and $ \epsilon \in \mathbb{N}$ with each of $\delta$ and $\epsilon$ either even or $1$. There is a digraph $\overline{DG}$ such that every embedding of $\overline{DG}$ in $\mathbb{R}^3$ contains a link whose mod 2 linking pattern contains the complete bipartite graph $K_{n,n}$, where every component of the first partition is $\delta$-directional and every component of the second partition is $\epsilon$-directional.
\end{prop}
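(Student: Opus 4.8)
\emph{Reduction and directionality convention.}
The plan is to follow the corresponding proposition of \cite{FlaMelNai}, with Lemmas~\ref{newmanylinks}, \ref{lem-01mtx}, and~\ref{bigZ} doing the work. It is enough to produce a digraph $\overline{DG}$ such that every embedding in $\mathbb{R}^3$ contains pairwise disjoint cycles $A_1,\dots,A_n$ and $B_1,\dots,B_n$ with each $A_s$ $\delta$-directional, each $B_t$ $\epsilon$-directional, and $\omega(A_s,B_t)=1$ for all $s,t$; the $A_s$ will be the first partition and the $B_t$ the second. Throughout, to \emph{make a cycle $d$-directional}: if $d$ is even, invoke the $2\delta$-directional clause of Lemma~\ref{bigZ} with $\delta=d/2$, which uses up $d-2$ auxiliary vertices disjoint from the cycles currently in play; if $d=1$, take the unmodified cycle $Z$ produced by Lemma~\ref{bigZ}, which is consistently directed, since in that construction each $J_i$ has its two $u_i$--$w_i$ arcs running the same way, so the symmetric differences never change the directionality of $C$. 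Thus the hypothesis that $\delta$ and $\epsilon$ are each even or $1$ is exactly the flexibility Lemma~\ref{bigZ} offers.

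\emph{A keyring gadget.}
Let $H$ be the symmetric complete digraph $\overline{DK_{24n}}$ together with $\epsilon-2$ more vertices (none if $\epsilon=1$) adjacent to everything. In any embedding of $H$, Lemma~\ref{newmanylinks} applied to the $\overline{DK_{24n}}$ part yields $4n$ pairwise disjoint $2$-component links $(a_l,b_l)$ with odd linking number and $2$-directional components; feeding these to Lemma~\ref{bigZ}, with the $a_l$ in the role of the $2$-directional components and the $\epsilon-2$ extra vertices (disjoint from all the $a_l$ and $b_l$) as auxiliary vertices, produces an $\epsilon$-directional cycle $B$, disjoint from every $b_l$, with $\omega(B,b_l)=1$ for at least $n$ values of $l$; relabel $n$ of those $b_l$ as $J^1,\dots,J^n$. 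So every embedding of $H$ contains pairwise disjoint cycles $B,J^1,\dots,J^n$ with $B$ $\epsilon$-directional, each $J^r$ $2$-directional, and $\omega(B,J^r)=1$ for all $r$. Now set $M=n\,4^{\,n}$, let $H_1,\dots,H_M$ be vertex-disjoint copies of $H$, adjoin vertex-disjoint blocks $E_1,\dots,E_n$ of $\delta-2$ vertices each (none if $\delta=1$), and let $\overline{DG}=\overline{DK_r}$ for $r$ large enough to contain all of these on disjoint vertex sets. Write $B_i,J^1_i,\dots,J^n_i$ for the cycles coming from $H_i$.

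\emph{The iteration.}
Fix an embedding of $\overline{DG}$ and run $n$ rounds, keeping a set $I_t\subseteq\{1,\dots,M\}$ with $|I_t|\ge M/4^{\,t}$ and $I_0=\{1,\dots,M\}$. In round $t$, discard one index if necessary so $|I_{t-1}|$ is even, and apply Lemma~\ref{bigZ} to the links $\{(J^t_i,B_i):i\in I_{t-1}\}$: each $J^t_i$ is $2$-directional and $\omega(J^t_i,B_i)=1$, so the lemma gives a cycle $A_t$ whose vertex set lies in $\bigcup_{i\in I_{t-1}}J^t_i$ (together with $E_t$, which is disjoint from every $J^t_i$ and $B_i$, and which we use to make $A_t$ $\delta$-directional), along with a set $I_t\subseteq I_{t-1}$ of size at least $|I_{t-1}|/4$ such that $\omega(A_t,B_i)=1$ for all $i\in I_t$. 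Since the vertices of $A_t$ lie in $\bigcup_i J^t_i\cup E_t$, and the cycles $J^t_i$ (over all $t,i$) are pairwise vertex-disjoint and disjoint from every $B_i$ and from every $E_{t'}$ with $t'\ne t$, the cycles $A_1,\dots,A_n$ and all the $B_i$ are pairwise disjoint; moreover each $B_i$ keeps the $\epsilon$-directionality it was built with, because Lemma~\ref{bigZ} never alters its $X$-components. After $n$ rounds $|I_n|\ge M/4^{\,n}=n$, so choosing distinct $i_1,\dots,i_n\in I_n$, the cycles $A_1,\dots,A_n,B_{i_1},\dots,B_{i_n}$ form a link whose mod $2$ linking pattern contains $K_{n,n}$ with first partition $\delta$-directional and second partition $\epsilon$-directional.

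\emph{The main obstacle.}
The crux is arranging, in a single fixed graph, that the iteration never runs out of material: each surviving key $B_i$ must still be linked mod $2$ to $n$ pairwise disjoint, as-yet-unused $2$-directional cycles, so that a fresh $A_t$ can be built in each round; this is exactly the job of the keyring gadgets $H_i$. Dual to that, each newly built $A_t$ must not interfere with later rounds, which holds because the vertices of $A_t$ are confined to $\bigcup_i J^t_i\cup E_t$. The remaining points---having enough auxiliary vertices left to realize the prescribed even directionalities (the blocks $E_t$ and the spares inside each $H_i$), and tracking $\omega$ correctly through the symmetric-difference operations of Lemma~\ref{bigZ}---are routine once this disjointness is in place. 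The constants ($M=n\,4^{\,n}$, the size $24n$ inside each $H$) are not optimized.
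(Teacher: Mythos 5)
Your proposal is correct and follows essentially the same route as the paper: build many disjoint mod~2 generalized keyrings with $\epsilon$-directional rings by combining Lemma~\ref{newmanylinks} with Lemma~\ref{bigZ}, then apply Lemma~\ref{bigZ} $n$ more times (losing a factor of $4$ each round, whence $M=n\,4^n$) to produce the $\delta$-directional cycles of the other partition. In fact your write-up supplies explicit constants, the auxiliary-vertex bookkeeping, and the disjointness argument that the paper's two-paragraph sketch leaves implicit.
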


\begin{proof}
The argument largely follows the proof of the corresponding proposition in \cite{FlaMelNai}, and 
we begin by summarizing their approach.
First observe that for any given $m$, there's a $p$ such that 
every embedding of $K_p$ contains $m$ disjoint mod 2 generalized keyrings, each having $n$ keys. 
Let $X_1, \dots, X_m$ denote the rings. Apply (the analogue of) Lemma~\ref{bigZ} $n$
times to construct $n$ cycles $Z_1, \dots, Z_n$ and index set $I_n$ of size at least $n$ so that,
for every $i \in I_n$ and every $j \leq n$, $\omega(Z_j, X_i) = 1$. The mod 2 linking pattern of the 
$Z_j$'s and $X_i$ with $i \in I_n$ then contains $K_{n,n}$.

It remains to modify the argument to take directionality into account. By combining 
Lemmas~\ref{newmanylinks} and \ref{bigZ}, we can find an embedding of $\overline{DK_{p}}$ with a desired
number $m$ of disjoint mod 2 generalized keyrings, such that each ring $X_i$ is $\epsilon$-directional. By applying Lemma~\ref{bigZ} $n$ times we again construct 
$I_n$ of cardinality at least $n$ and $\delta$-directional rings $Z_1, \dots, Z_n$ such that each $\omega(Z_j, X_i) = 1$. 
\end{proof}

\begin{lemma}
\label{bipar}
Let $\lambda \in \mathbb{N}$.
Let $\overline{DK_p}$ be embedded in $\mathbb{R}^3$ such that it contains a link 
with two-directional components $J_1, \cdots , J_r$, $L_1, \cdots , L_q$, $X_1, \cdots , X_m$, $Y_1, \cdots , Y_n$, 
where $r \geq m(2\lambda +1)2^m$, $q \geq (m+n)(2\lambda +1)3^m2^n$,  
and for every $i, j,\alpha, \beta$, $\mathrm{lk}(J_i,X_\alpha) \neq 0$ and $\mathrm{lk}(L_j, Y_\beta) \neq 0$.
Then $\overline{DK_p}$ 
contains a $1$-directional cycle $Z$ with vertices on $J_1\cup\cdots\cup J_r\cup L_1\cup\cdots\cup L_q$ 
such that for every $\alpha$ and $\beta$, $|\mathrm{lk}(Z, X_\alpha)| >\lambda$ and $|\mathrm{lk}(Z,Y_\beta)|>\lambda$.
Furthermore, for every  $\delta \ge 1$, $Z$ can be chosen to be $2\delta$-directional
if $\overline{DK_p}$ contains at leat
$2\delta-2$ vertices  disjoint from all $J_i, X_{\alpha}, L_j, Y_{\beta}$.
\end{lemma}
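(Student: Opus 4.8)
The plan is to follow the corresponding integer-weight argument in \cite{FlaMelNai} (the analogue of Lemma~\ref{bipar} there), which amplifies the "one nonzero linking number" hypothesis into "linking number larger than $\lambda$ in absolute value" by summing many cycles via the symmetric difference (cycle-sum) operation $\nabla$ and using a counting/pigeonhole argument to control signs. The role of Lemma~\ref{lem-01mtx} in the mod~2 setting is played here by an analogous statement over $\mathbb Z$: from a large collection of cycles each having nonzero linking number with a fixed target component, one can $\nabla$-combine a subcollection so that the resulting linking number is large; doing this simultaneously against all $m$ components $X_\alpha$ (and, in a second stage, all $n$ components $Y_\beta$) costs a factor of $2^m$ (respectively $3^m 2^n$) in how many input cycles are required, which is exactly why the hypotheses ask for $r \ge m(2\lambda+1)2^m$ and $q \ge (m+n)(2\lambda+1)3^m 2^n$.

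The key steps, in order, are as follows. First, work with the $J_i$'s and the targets $X_\alpha$: among the $r$ cycles $J_1,\dots,J_r$, use the counting argument to extract a sub-family and a sign pattern so that an appropriate $\nabla$-sum $Z_0$ of them satisfies $|\lk(Z_0,X_\alpha)| > \lambda$ for every $\alpha$; the bound $r \ge m(2\lambda+1)2^m$ is what makes this possible. Second, repeat with the $L_j$'s against the targets $Y_\beta$ — but now one must also keep from destroying the linking with the $X_\alpha$'s, so the $L_j$'s must additionally be grouped according to their linking-number vector mod something against the $X_\alpha$'s; this is the source of the extra $3^m$ factor (three residues $\{-1,0,1\}$-type bookkeeping per $X_\alpha$) and the extra $m+n$ in the coefficient. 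One then forms $Z = Z_0 \,\nabla\, Z_1$ where $Z_1$ is the $\nabla$-sum coming from the $L_j$'s, and checks that $|\lk(Z,X_\alpha)|>\lambda$ and $|\lk(Z,Y_\beta)|>\lambda$ simultaneously. Third — the directionality bookkeeping — exactly as in the proof of Lemma~\ref{bigZ}: since each $J_i$ and each $L_j$ is $2$-directional, each has two direction-changing vertices $u,w$ with both $u$--$w$ arcs consistently directed; stringing chosen arcs $q_i$ together with connecting edges $w\,u'$ produces a $1$-directional cycle representing the desired $\nabla$-sum (the cycle-sum of $2$-directional cycles can be realized as a $1$-directional cycle by this arc-routing trick). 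To upgrade $Z$ to $2\delta$-directional, replace one connecting edge by a path through the $2\delta-2$ extra vertices, changing direction at each of them and at its two endpoints, precisely as in Lemma~\ref{bigZ}; this changes $Z$ only by an isotopy-irrelevant re-routing that does not affect any linking number with the $X_\alpha$ or $Y_\beta$.

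The main obstacle I expect is the second stage: making the $L_j$-combination boost all the $\lk(Z,Y_\beta)$ above $\lambda$ \emph{without} spoiling the already-achieved control on the $\lk(Z,X_\alpha)$. This is the step that forces the asymmetry between the hypotheses on $r$ and on $q$ and the appearance of the $3^m2^n$ factor, and getting the pigeonhole counting exactly right (how many residue classes to track, and hence the precise exponential bound) is the delicate part; the directionality part, by contrast, is a routine adaptation of the arc-routing construction already used in Lemmas~\ref{newmanylinks} and~\ref{bigZ}, and the cycle-sum/linking-number algebra is standard. I would therefore structure the write-up to first dispatch the two-stage counting argument over $\mathbb Z$ (citing the parallel computation in \cite{FlaMelNai} for the bookkeeping constants), and then append the short paragraph that reroutes the resulting cycle to achieve $1$- or $2\delta$-directionality.
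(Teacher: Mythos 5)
Your proposal follows essentially the same route as the paper's proof: a two-stage amplification in which the $J_i$'s are first pruned by sign (costing the $2^m$) to boost all $|\lk(\cdot,X_\alpha)|$ past $\lambda$ via a pigeonhole on the strictly increasing sequences $\lk(C_s,X_\alpha)$, the $L_j$'s are then pruned both by sign against the $Y_\beta$'s ($2^n$) and into the three sign classes against each $X_\alpha$ ($3^m$) so that the second $\nabla$-stage cannot spoil the first, and the $1$- or $2\delta$-directionality is obtained by the same arc-routing through the direction-changing vertices used in Lemma~\ref{bigZ}. The details you defer (the monotonicity of $\lk(C_s,X_\alpha)$ in $s$ and the column-counting in the $m\times(m(2\lambda+1)+1)$ matrix, and the set $S$ of at most $m+n$ targets in the second stage) are exactly the ones the paper supplies, so the plan is sound as written.
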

\begin{figure}[h]

\includegraphics[width = 13cm]{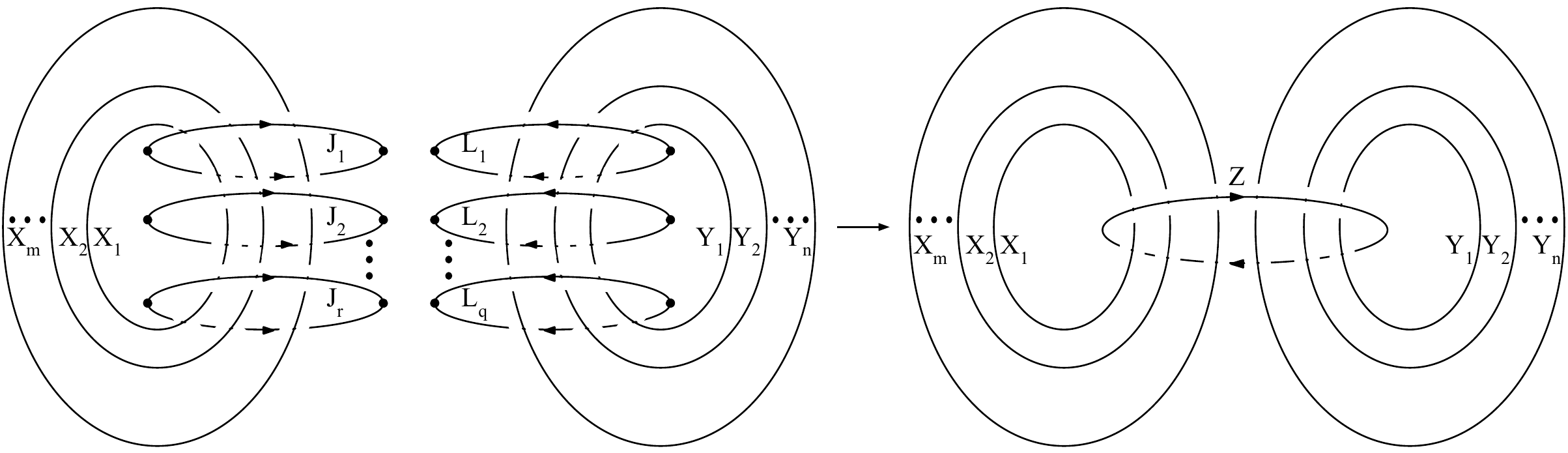}
\caption{Illustration of Lemma \ref{bipar}}

\end{figure}
\begin{proof}
The first step in the proof of the corresponding lemma in \cite{FlaMelNai} involves discarding $J_i$'s so that we are left with only positive linking numbers between each $J_i$ and each $X_\alpha$. To do this, note that at least half the linking numbers $\lk(J_i,X_1)$ have the same sign. If this sign is negative, we reverse the orientation of $X_1$ so that they become positive. Of the $J_i$'s which have positive linking number with $X_1$, at least half have the same signed linking number with $X_2$;  we repeat the process, eventually finding a set of at least $\frac{r}{2^m} \ge m(2\lambda + 1)$  $J_i$'s which each have positive linking number with every $X_\alpha$. 
We will assume without loss of generality that we are left with $J_1, \cdots, J_{m(2\lambda + 1)}$.

The same process can be used to find a set of $\frac{q}{2^n} \ge (m+n)(2\lambda + 1)3^m$ $L_j$'s which each have positive linking number with every $Y_\beta$.

For one final discard, we wish to throw out some of the remaining $L_j$'s so that for each $\alpha$, the linking numbers $\lk(L_j,X_\alpha)$ are either positive, negative, or zero for all $j$. 
For each $\alpha$, at least a third of the linking numbers $\lk(L_j,X_\alpha)$  falls into one of these three categories, 
so, after discarding the $L_j$'s in the two other categories, 
we  retain at least a third of the $L_j$'s. 
This process leaves us in the end with at least $\frac{(m+n)(2\lambda + 1)3^m}{3^m} = (m+n)(2\lambda + 1)$ $L_j$'s.
We will assume without loss of generality that we are left with $L_1, \cdots, L_{(m+n)(2\lambda + 1)}$.

Next, we create a cycle $C_0$ with vertices on $J_1,\cdots,J_{m(2\lambda +1)}$, and $L_1,\cdots,L_{(m+n)(2\lambda+1)}$. 
For $i \leq m(2\lambda +1)$, let $u_i$ and $w_i$ be the vertices on $J_i$ where direction changes, 
and for $j \leq (m+n)(2\lambda +1)$, let $u_{m(2\lambda+1)+j}$ and $w_{m(2\lambda+1)+j}$ be vertices on $L_j$ where direction changes. 
For $i \leq m(2\lambda+1)$, let $q_i$ be the path from $u_i$ to $w_i$ on $J_i$ which is directed opposite the orientation of $J_i$, 
and for $j \leq (m+n)(2\lambda+1)$, let $q_{m(2\lambda+1)+j}$ be the path from $u_{m(2\lambda +1)} + j$ to $w_{m(2\lambda +1)+j}$ on $L_j$ which is directed opposite the orientation of $L_j$. 
Additionally, for $k < (2m+n)(2\lambda +1)$, let $e_k$ be the edge in $\overline{DK_p}$ directed from $w_k$ to $u_k+1$.
For $k = (2m+n)(2\lambda +1)$,
let $e_{(2m+n)(2\lambda +1)}$ be the edge directed from $w_{(2m+n)(2\lambda+1)}$ to $u_1$  
if a one-directional cycle $Z$ is desired. 
If a $2$-directional cycle $Z$ is desired instead, 
let $e_{(2m+n)(2\lambda +1)}$ be the edge directed from $u_1$ to  $w_{(2m+n)(2\lambda+1)}$.  
And if a $2\delta$-directional cycle $Z$, where $\delta \ge 2$, is desired,
let $e_{(2m+n)(2\lambda+1)}$ be a path in $\overline{DK_p}$ from $w_{(2m+n)(2\lambda+1)}$ to $u_1$ 
that uses the $2\delta - 2$ additional vertices given in the hypothesis of the lemma,
such that direction changes at $w_{(2m+n)(2\lambda+1)}$, at $u_1$, and at every additional vertex.


Now, let $C_0$ be the union of all $e_k$ and $q_k$ for $1 \leq k \leq (2m+n)(2\lambda+1)$;
and let $C_s = C_0\nabla J_1\nabla\cdots\nabla J_s$, $1 \leq s \leq m(2\lambda +1)$.
We orient $C_0$  in the same direction as the $q_k$'s
 (i.e., on each arc $L_j \cap C_0$, $L_j$ and $C_0$ have opposite orientations),
 and the orientation of each $C_s$ is induced by that of $C_0$.
This implies  $\lk(C_{s+1},X_\alpha) > \lk(C_s,X_\alpha)$ for every $s$, including $s = 0$. 
Now, consider the matrix $A$ with entries $A_{\alpha, s} = \lk(C_s,X_\alpha)$,
where $1 \leq \alpha \leq m$, $0 \leq s \leq m(2\lambda +1)$.
Observe that the entries in each row of $A$ are pairwise distinct;
so in each row at most $2\lambda +1$ entries have magnitude less than or equal to $\lambda$.
Since $A$ has $m$ rows,  
it contains at most $m(2\lambda +1)$ entries which have magnitude less than $\lambda$.
On the other hand, $A$ has $m(2\lambda +1) +1$ columns,
which implies at least one of the columns has no entry less than or equal to $\lambda$.
In other words, for some $s$, $|\lk(C_s,X_\alpha)| > \lambda$ for every $\alpha$.
We let $D_0$ denote this cycle $C_s$.

Recall that for each $\alpha$, $\lk(X_\alpha, L_j)$ has the same sign ($+$, $-$, or 0) for every $j$.
For each $X_\alpha$, by reversing its orientation if necessary, 
we can assume its linking number with every $L_j$ is non-negative. 
Note that this does not change the fact that $|\lk(D_0,X_\alpha)| > \lambda$ for every $\alpha$.
Now, let $S$ be  the set of all $Y_\beta$'s and all $X_\alpha$'s which have positive linking number with every $L_j$. 
Thus, $S$ contains all $Y_\beta$'s and some $X_\alpha$'s --- 
at most $m+n$ cycles altogether.
For $1 \le t \leq (m+n)(2\lambda+1)$, let $D_t = D_0\nabla L_1\nabla\cdots\nabla L_t$ .
Then by a similar argument as above,
there is some $t$ such that $| \lk(D_t,A)| > \lambda$ for all $A \in S$. 
Let $Z$ denote this cycle $D_t$.
Observe that for each  $X_\alpha \not\in S$,
$|\lk(Z,X_\alpha)|  = |\lk(D_0,X_\alpha)| > \lambda$
since $\lk(X_\alpha, L_j) = 0$.
Thus $Z$ is our desired cycle.
\end{proof}

\begin{proof} (of Theorem 1)
The proof is largely similar to the corresponding theorem in \cite{FlaMelNai}. The main 
difference is we work with digraphs and must pay attention to the directionality of cycles.
For $m,n \in \mathbb{N}$, let $H(n,m)$ denote the complete $(n+2)$-partite graph with two parts ($P_1$ and $P_2$) of size $m$ and the remaining
parts ($Q_1, \dots, Q_n$) being single vertices. By induction on $n$, for every $n \geq 0$ and $m \geq 1$, 
we will show there is a digraph $\overline{DG}$ such that every embedding of $\overline{DG}$
includes a link whose linking pattern contains $H(n,m)$ with $Q_i$ to $Q_j$ edges of weight greater than $\lambda$ and each 
cycle represented by a $Q_i$ vertex $\delta$-directional. 

When $n = 0$, $H(0,m) = K_{m,m}$. By Proposition~\ref{prop1}, for every $m$, there is a digraph $\overline{DG}$ so that 
every embedding includes a link with linking pattern containing $K_{m,m}$. Moreover, we can assume that all cycles in the link 
are $2$-directional.

For the inductive step, assume that, 
for some $n \geq 0$ and every $m \geq 1$, there is a digraph $\overline{DG}$ such that every embedding if $\overline{DG}$ in $\mathbb{R}^3$ includes a link with linking pattern containing $H(n,m)$, where the weight of every $Q_i,Q_j$ edge exceeds $\lambda$, cycles for
vertices in $P_1$ and $P_2$ are $2$-directional, and those for $Q_i$ vertices are $\delta$-directional. Given $m$, 
let $q = (2m+n)(2\lambda + 1)3^m2^{m+n}$ and $s = m+q$. In the graph $H(n,s)$, label the $s$ vertices in $P_1$,  $X_1,\dots,X_m,L_1,\dots,L_q$ and those in $P_2$, $Y_1,\dots,Y_m,J_1,\dots,J_q$. For $i\leq n$ let $Y_{m+i}$ denote the vertex in$Q_i$.

By induction, there is a $\overline{DG}$ that includes, in each embedding, a link $L$ with linking pattern containing $H(n,s)$ with the desired weights and directionalities. Without loss of generality, we assume $\overline{DG}$ is a complete graph $\overline{DK_p}$. 
Fix an embedding of $\overline{DK_p}$ in $\mathbb{R}^3$.
We will show that this embedding also contains a link whose weighted linking pattern contains $H(n+1,m)$ with the desired weights, and whose components possess the desired directionality.

By abuse of notation, we denote the components of $L$ in $\overline{DK_p}$ by the name of the 
vertex that represents the component in the linking pattern.
Applying Lemma \ref{bipar} to the link in $\overline{DK_p}$ with components $J_1,\dots,J_q,L_1,\dots,L_q,X_1,\dots,X_m$, and $Y_1,\dots,Y_{m+n}$ where $r=q=(2m+n)(2\lambda+1)3^m2^{m+n}$, we find a $\delta$-directional cycle $Y_{m+n+1}$ where $|lk(Y_{m+n+1},X_\alpha)| > \lambda$ and $|lk(Y_{m+n+1},Y_\beta)| > \lambda$ for every $\alpha \leq n$ and $\beta \leq m+n$.

Thus, $\overline{DK_p}$ inludes a link $L'$ with components $X_1,\dots,X_m,$ and
$Y_1, \dots ,Y_{m+n+1}$, which can be partitioned into subsets corresponding to the vertices of $H(n+1,m)$. Namely $P'_1$ is the $X_i$ components,
$P'_2$ are the first $m$ $Y_i$'s and the remaining $Y_i$'s go, one each, to a $Q'_i$.
In $L'$, every component in one partition is linked with every component in all other partitions, each component in a partition $Q_i'$ is $\delta$-directional, and for every $i \neq j$ where $i,j \leq n+1$, $|lk(Y_{m+i},Y_{m+j})| > \lambda$. Thus, the weighted linking pattern of $L'$ contains $H(n+1,m)$, with the desired weights and directionality for every vertex and edge among partitions $Q_1',\dots,Q_{n+1}'$.

Therefore, we have proven that for every $n \geq 0$ and $m \geq 1$, there is a digraph $\overline{DG}$ whose every embedding in $\mathbb{R}^3$ contains a link whose linking pattern contains $H(n,m)$ where the weight of every edge between vertices $Y_{m+i}$ and $Y_{m+j}$ in $Q_i$ and $Q_j$ respectively is greater than $\lambda$, and the cycle represented by $Y_{m+i}$ is $\delta$-directional for every $i \leq n$. As $K_n$ is a subgraph of $H(n,m)$, we have shown that every embedding of $G$ in $\mathbb{R}^3$ contains a link whose linking pattern is $K_n$, the weight of every edge of $K_n$ being greater than $\lambda$, and every cycle represented by a vertex in $K_n$ being $\delta$-directional.
\end{proof}

\section{Intrinsic knotting}

In this section, we prove Theorem~2. Even more than what has gone before, we follow closely the argument of \cite{FlaMelNai}. 
We begin with two definitions from that paper.
{\em The weighted knotting and linking pattern} of the oriented link $L$ is the weighted linking
pattern along with the weight $|a_2(L_i)|$ on the vertex corresponding to component $L_i$, for each $i$. 
We use $J \nabla L$ for the closure of the 
symmetric difference and $J \nabla \epsilon L$ is $J \nabla L$ if $\epsilon = 1$ and $J \nabla \emptyset = J$, when $\epsilon = 0$.

The proof of the following three lemmas is virtually identical to those given in 
\cite{FlaMelNai} and we refer the reader there for details. The only
novelty is in the proof of the first lemma where, taking advantage of the $2$-directionality of $B_i$, 
we choose vertices $x_i$ and $y_j$ in $B_i$ so that both paths in $B_i$ are directed from $x_i$ to $y_j$.

\begin{lemma}
\label{b6}

Let $\lambda > 0$. In an embedding of $\overline{DK_r}$ in $\mathbb{R}^3$,  
let $A_1,\cdots,A_n$ be disjoint $1$-directional cycles and
 $B_1,\cdots,B_{6n+6}$ disjoint $2$-directional cycles such that $lk(A_h,B_i) \geq \lambda$ for all $h$ and $i$. Then there exist disjoint $2$-directional cycles $C_1,C_2,C_3,C_4 \in\{B_i\}$ and a $1$-directional cycle $W'$ in $\overline{DK_r}$ with verticies on $\bigcup_i B_i$ such that $W'$ intersects each $C_i$ in exactly one arc. In addition, $|lk(A_h,W'\nabla\epsilon_1C_1\nabla\epsilon_2C_2\nabla\epsilon_3C_3\nabla\epsilon_4C_4)| \geq \lambda$ for every $h$ and every choice of $\epsilon_1, \cdots, \epsilon_4 \in \{0,1\}$.
\end{lemma}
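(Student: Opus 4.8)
The plan is to adapt the proof of the corresponding lemma in \cite{FlaMelNai}, the only new ingredient being a careful choice of the ``direction-change'' vertices on each $B_i$ so that the cycle $W'$ we build is genuinely $1$-directional. First I would recall how the undirected argument produces $W'$: among the $6n+6$ cycles $B_i$ one applies a Ramsey-type / pigeonhole argument (as in the proof of Lemma~\ref{bigZ} and Lemma~\ref{bipar}) to select four of them, say $C_1,\dots,C_4$, together with a cycle $W'$ made of one arc from each $C_k$ joined by four connecting edges in $\overline{DK_r}$, arranged so that $W'$ meets each $C_k$ in exactly one arc. The linking-number bound $|\mathrm{lk}(A_h, W'\nabla\epsilon_1 C_1\nabla\cdots\nabla\epsilon_4 C_4)|\ge\lambda$ for \emph{all} $2^4$ sign choices is obtained exactly as in \cite{FlaMelNai}: since $\mathrm{lk}(A_h,B_i)\ge\lambda>0$ for every $h,i$, one first discards $B_i$'s so that within each $A_h$ the relevant linking numbers all have a consistent sign, and then chooses the arcs of $W'$ and the orientation of $W'$ so that symmetric-differencing with any subset of $\{C_1,\dots,C_4\}$ only increases (the absolute value of) the linking number with each $A_h$; the count $6n+6 = 6(n+1)$ is what makes the nested pigeonholing go through for all $h\le n$ simultaneously.

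Second, I would insert the directedness bookkeeping. Each $B_i$ is $2$-directional, so it has exactly two vertices where direction changes; as in the proof of Lemma~\ref{bigZ}, call them $x_i$ and $y_i$ and choose the labelling so that both of the two arcs of $B_i$ between $x_i$ and $y_i$ are directed from $x_i$ toward $y_i$. Having selected $C_1,\dots,C_4$ (relabelled from the $B_i$'s in some order), I let $q_k$ be one of the two $x_k$-to-$y_k$ directed arcs of $C_k$, and I form $W'$ from $q_1,\dots,q_4$ together with the four directed edges $y_1x_2, y_2x_3, y_3x_4, y_4x_1$ of $\overline{DK_r}$ (which exist since $\overline{DK_r}$ is a complete symmetric digraph). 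Then $W'$ is consistently directed around the cycle, hence $1$-directional, and by construction $W' \cap C_k = q_k$ is a single arc, as required. The point of choosing \emph{both} arcs of $B_i$ to run from $x_i$ to $y_i$ is precisely that it lets us pick $q_k$ freely while keeping $W'$ $1$-directional, and independently of which arc we pick the symmetric difference $W'\nabla C_k$ (which swaps $q_k$ for the complementary arc of $C_k$) still has a well-defined linking number with each $A_h$ — the directedness plays no role in the homological computation, so the linking-number estimates from the first paragraph are untouched.

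The main obstacle to watch is compatibility: I must make sure the arc-and-orientation choices forced by the linking-number optimization in the first paragraph do not conflict with the direction-change requirement in the second. This is fine because the two are essentially independent — the linking estimate constrains only the \emph{orientation} of $W'$ and which \emph{cycles} $C_k$ are kept, while the $2$-directionality of each $B_i$ lets us route $W'$ through the prescribed vertices $x_k,y_k$ with either choice of arc $q_k$ and with $W'$ oriented either way around. Concretely, after choosing the orientation of $W'$ dictated by the sign analysis, I orient each $q_k$ to agree with that choice, which is possible since an arc from $x_k$ to $y_k$ can be traversed in the direction matching whichever way $W'$ runs; and if the sign analysis instead prefers the complementary arc of some $C_k$, I simply take $q_k$ to be that arc (still running $x_k$ to $y_k$ by our labelling convention). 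Finally I would note that the four disjoint arcs $C_k \setminus q_k$ are unused by $W'$, so $W'$ is disjoint from the $A_h$'s and the conclusion holds; the details of the counting and the $\mathbb{Z}$-homology linking computation are identical to \cite{FlaMelNai} and are omitted.
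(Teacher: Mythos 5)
Your proposal is correct and takes essentially the same route as the paper, which likewise defers all of the linking-number counting to \cite{FlaMelNai} and records as the only new ingredient exactly the device you use: choosing the two direction-change vertices $x_i$, $y_i$ on each $2$-directional $B_i$ so that both arcs of $B_i$ are directed from $x_i$ to $y_i$, which keeps any connected sum of such arcs (joined by edges $y_i x_{i+1}$) consistently directed regardless of which arc is selected or later swapped by $\nabla$. One small inaccuracy worth noting: in the \cite{FlaMelNai} construction $W'$ carries one arc from each of many of the $B_i$'s, not just from the four $C_k$'s (this is why $6n+6$ cycles are needed), but your directionality bookkeeping applies verbatim to that larger cycle.
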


\begin{lemma}
\label{d4}

Let $\lambda > 0$.
In an embedding of $\overline{DK_r}$ in $\mathbb{R}^3$,  
 let $A_1,\cdots,A_n$ be disjoint $1$-directional cycles and let $B_1,\cdots,B_{6n+6}$ be disjoint $2$-directional cycles such that $lk(A_h,B_i) \geq \lambda$ and $|lk(B_i,B_j)| \geq \lambda$ for all $h, i$, and $j$. Then there exists a $1$-directional cycle $K$ in 
 $\overline{DK_r}$ with verticies on $\bigcup{i}B_i$ such that $|a_2(K)| \geq \lambda^2/16$ and $|lk(A_h,K)| \geq \lambda$ for every $h$.

\end{lemma}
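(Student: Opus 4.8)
The plan is to follow the proof of the corresponding lemma in \cite{FlaMelNai} essentially verbatim, with Lemma~\ref{b6} supplying the cycles and the one new ingredient being a directionality check. I would begin by applying Lemma~\ref{b6} to $A_1,\dots,A_n$ and $B_1,\dots,B_{6n+6}$, obtaining pairwise disjoint $2$-directional cycles $C_1,C_2,C_3,C_4\in\{B_i\}$ and a $1$-directional cycle $W'$ with vertices on $\bigcup_i B_i$ that meets each $C_i$ in exactly one arc and satisfies $|lk(A_h,\,W'\nabla\epsilon_1C_1\nabla\epsilon_2C_2\nabla\epsilon_3C_3\nabla\epsilon_4C_4)|\geq\lambda$ for every $h$ and every $(\epsilon_1,\dots,\epsilon_4)\in\{0,1\}^4$. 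Write $K_{\vec\epsilon}=W'\nabla\epsilon_1C_1\nabla\epsilon_2C_2\nabla\epsilon_3C_3\nabla\epsilon_4C_4$; each $K_{\vec\epsilon}$ is a cycle with vertices on $\bigcup_i B_i$, and, since $C_1,\dots,C_4$ are among the $B_i$, the hypothesis gives $|lk(C_i,C_j)|\geq\lambda$ for $i\neq j$. The cycle $K$ we output will be $K_{\vec\epsilon}$ for a suitable $\vec\epsilon$, so $|lk(A_h,K)|\geq\lambda$ is automatic; it remains to check that each $K_{\vec\epsilon}$ is $1$-directional and to choose $\vec\epsilon$ so that $|a_2(K_{\vec\epsilon})|\geq\lambda^2/16$.

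For the directionality I would unwind the construction of $W'$ in Lemma~\ref{b6}: there, using the $2$-directionality of $B_i$, the two direction-change vertices $x_i,y_i$ of $C_i$ are chosen so that both arcs of $C_i$ from $x_i$ to $y_i$ are consistently directed from $x_i$ to $y_i$, and $W'\cap C_i$ is one of those two arcs. Forming $K_{\vec\epsilon}$ amounts to deleting, for each $i$ with $\epsilon_i=1$, the arc $W'\cap C_i$ and inserting the complementary arc of $C_i$ in its place. Since both arcs run consistently from $x_i$ to $y_i$, this substitution creates no new direction-change vertex and leaves the directions along the untouched part of the cycle unchanged; hence $K_{\vec\epsilon}$ is $1$-directional because $W'$ is. This is exactly the novelty flagged before Lemma~\ref{b6}, so no further work is needed here.

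For the $a_2$ estimate I would quote the computation of \cite{FlaMelNai}: because $W'$ meets the pairwise disjoint circles $C_1,\dots,C_4$ each in a single arc, a suitable signed sum of the sixteen numbers $a_2(K_{\vec\epsilon})$, $\vec\epsilon\in\{0,1\}^4$, reduces to an expression in the linking numbers $lk(C_i,C_j)$; after reorienting the $C_i$ as in \cite{FlaMelNai} to rule out cancellation, this expression has absolute value at least $\lambda^2$. Being a sum of sixteen terms $\pm a_2(K_{\vec\epsilon})$, it forces $|a_2(K_{\vec\epsilon})|\geq\lambda^2/16$ for at least one $\vec\epsilon$. Taking $K=K_{\vec\epsilon}$ for such a $\vec\epsilon$ gives a $1$-directional cycle on $\bigcup_i B_i$ with $|a_2(K)|\geq\lambda^2/16$ and $|lk(A_h,K)|\geq\lambda$ for all $h$, as required.

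I expect no real obstacle inside this proof: the $a_2$ identity and the pigeonhole step are taken straight from \cite{FlaMelNai}, and the only genuinely new point — that every $K_{\vec\epsilon}$ remains $1$-directional — is already guaranteed by the way the arcs $W'\cap C_i$ were selected in Lemma~\ref{b6}. The substantive work, namely finding a single $1$-directional $W'$ meeting four of the $B_i$ in arcs while simultaneously controlling $lk(A_h,K_{\vec\epsilon})$ for all $h$, lives in Lemma~\ref{b6}, which we take as given.
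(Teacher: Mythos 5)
Your proposal matches the paper's approach exactly: the paper itself gives no independent proof of this lemma, stating only that the argument is virtually identical to the one in \cite{FlaMelNai} (apply Lemma~\ref{b6}, use the signed-sum identity for $a_2$ over the sixteen cycles $W'\nabla\epsilon_1C_1\nabla\cdots\nabla\epsilon_4C_4$, and pigeonhole), with the sole new point being the $1$-directionality of the resulting cycles, which you verify correctly via the choice of the direction-change vertices $x_i,y_i$ on each $C_i$. No gaps.
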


\begin{lemma}
\label{final}

Let $n, \lambda \in \mathbb{N}$. Suppose that a complete graph $\overline{DK_r}$ embedded in $\mathbb{R}^3$ contains a link $L_0$ with $f^n(n)$ $2$-directional components, where $f(n) = n-1+(6n)2^{n-2}$, such that the linking number of every pair of components of $L_0$ has absolute value at least $\lambda$. Then $\overline{DK_r}$ contains a link with $1$-directional components $Q_1, \dots, Q_n$ such that for every $i\neq j, |lk(Q_i,Q_j)| \geq \lambda$ and $|a_2(Q_i)| \geq \lambda^2/16$.

\end{lemma}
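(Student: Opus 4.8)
The plan is to run the inductive knotting argument of \cite{FlaMelNai}, using Lemma~\ref{d4} (and, through it, Lemma~\ref{b6}) as the engine and relying on the directionality clauses there to keep the cycles we build $1$-directional. It is cleanest to induct on $n$ while carrying a reservoir of cycles along, so I would prove the following statement, slightly stronger than Lemma~\ref{final}: \emph{for all $n,t\ge 0$ there is an integer $N(n,t)$ such that, if an embedded $\overline{DK_r}$ contains a link with disjoint cycles $A_1,\dots,A_t$ and two-directional cycles $B_1,\dots,B_{N(n,t)}$ satisfying $|\lk(A_h,B_i)|\ge\lambda$ for all $h,i$ and $|\lk(B_i,B_j)|\ge\lambda$ for all $i\ne j$, then $\overline{DK_r}$ contains $1$-directional cycles $Q_1,\dots,Q_n$, each disjoint from every $A_h$, with $|a_2(Q_i)|\ge\lambda^2/16$, with $|\lk(Q_i,Q_j)|\ge\lambda$ for all $i\ne j$, and with $|\lk(Q_i,A_h)|\ge\lambda$ for all $i,h$.} Lemma~\ref{final} is the case $t=0$, where $N(n,0)$ turns out to be of the iterated-exponential size recorded by $f^n(n)$. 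Here I would use Lemma~\ref{d4} in the form valid for cycles $A_h$ of \emph{arbitrary} directionality; its proof, following \cite{FlaMelNai}, uses only the two-directionality of the $B_i$, so this generality is free.

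The base case $n=0$ is vacuous. For the inductive step, given $A_1,\dots,A_t$ and $N(n+1,t)$ cycles $B_i$ as in the hypothesis, first discard $B_i$'s and reverse orientations of the $A_h$'s so that $\lk(A_h,B_i)\ge\lambda$ for all surviving $B_i$ (cost: a factor $2^t$), and set aside a sub-collection $\mathcal R$ of $N(n,t+1)$ survivors. Then apply Lemma~\ref{d4} with the $t+|\mathcal R|$ cycles $A_1,\dots,A_t$ and (those of) $\mathcal R$ in the role of its $A_h$'s, and $6(t+|\mathcal R|)+6$ of the remaining $B_i$'s --- after a further round of sign-adjustment among those, cost $2^{|\mathcal R|}$ --- in the role of its $B_i$'s. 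This produces a $1$-directional cycle $Q_{n+1}$, with vertices on those remaining $B_i$'s and hence disjoint from $\mathcal R$ and from $A_1,\dots,A_t$, with $|a_2(Q_{n+1})|\ge\lambda^2/16$, $|\lk(Q_{n+1},A_h)|\ge\lambda$ for all $h$, and $|\lk(Q_{n+1},B)|\ge\lambda$ for all $B\in\mathcal R$. Now apply the induction hypothesis to $\mathcal R$, with $\{A_1,\dots,A_t,Q_{n+1}\}$ in the role of its $A_h$'s: every cycle of $\mathcal R$ is $\lambda$-linked with each of these $t+1$ cycles, the cycles of $\mathcal R$ are pairwise $\lambda$-linked, and $|\mathcal R|=N(n,t+1)$, so we obtain $1$-directional $Q_1,\dots,Q_n$, pairwise $\lambda$-linked, each with $|a_2|\ge\lambda^2/16$, each $\lambda$-linked with $Q_{n+1}$ and with every $A_h$, and disjoint from $Q_{n+1}$ and the $A_h$'s. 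Then $Q_1,\dots,Q_{n+1}$ are as required, and the induction closes; unwinding the resulting recursion $N(n+1,t)\le 2^t\big(N(n,t+1)+6(t+N(n,t+1)+1)\,2^{N(n,t+1)}\big)$ --- in which the parameter $t$ grows by one per level, reaching order $n$ after $n$ levels --- gives a bound of iterated-exponential type, the displayed form of $f$ emerging once the discards and the $6m+6$ auxiliary cycles of Lemma~\ref{d4} are accounted for sharply.

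I expect the main obstacle to be exactly this bookkeeping, in two places. First, one must verify that after the sign-fixing discards and after reserving $\mathcal R$ there genuinely remain enough $B_i$'s --- pairwise $\lambda$-linked and each $\lambda$-linked with every cycle of $\mathcal R$ and with every $A_h$ --- to feed into Lemma~\ref{d4}, and that the cycle $Q_{n+1}$ it returns is vertex-disjoint from $\mathcal R$, so that the induction hypothesis applies to $\mathcal R$ unhindered; the need to reserve, at every level, a large pool of cycles that stay correctly linked with everything built so far is precisely what forces the number of components to be iterated-exponential in $n$. Second, one must check that the per-level constants really telescope to the stated $f^n(n)$ rather than to some larger iterated-exponential. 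By contrast the directionality part is routine: $\mathcal R$ stays two-directional (so the induction hypothesis applies to it), and each $Q_i$ comes out $1$-directional straight from Lemma~\ref{d4}.

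Finally, once Lemma~\ref{final} is established, Theorem~2 follows by choosing $\lambda$ large enough that $\lambda^2/16\ge\alpha$ (and $\lambda\ge\alpha$) and applying Theorem~1 --- with every weight at least $\lambda$ and directionality $\delta=2$ --- to produce, in every embedding of a sufficiently large $\overline{DK_r}$, a link with $f^n(n)$ two-directional components whose pairwise linking numbers all have absolute value at least $\lambda$, which is exactly the hypothesis of Lemma~\ref{final}.
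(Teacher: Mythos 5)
Your proposal is essentially the paper's own proof: the paper defers this lemma to \cite{FlaMelNai}, whose argument is exactly your induction --- reserve a pool of components for later stages, apply Lemma~\ref{d4} with the previously built $Q_i$'s and the reserved pool in the role of the $A_h$'s to produce the next $1$-directional knotted component, and let the $2^{(\cdot)}$ and $6(\cdot)+6$ costs drive the iterated function $f^n(n)$. Your side remark that Lemma~\ref{d4} must be invoked with $A_h$'s that are not all $1$-directional (the reserved pool is $2$-directional) is correct and harmless, since its proof uses only the linking numbers and the $2$-directionality of the $B_i$'s.
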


\begin{proof}
(of Theorem 2)
As usual, the proof is very similar to that given in \cite{FlaMelNai}.
Let $\lambda = \mathrm{Max}\{\alpha,4\sqrt{\alpha}\}$, let $f(n) = n-1+(6n)2^{n-2}$, and let $m = f^n(n)$. By Theorem~1, there exists a graph $\overline{DK_r}$ such that every embedding of $\overline{DK_r}$ in $\mathbb{R}^3$ contains a link $L_0$ with $m$ $\delta$-directed components such that each pair of components has a linking number whose absolute value is at least $\lambda$. Let $\delta = 2$. By Lemma \ref{final}, every embedding of $\overline{DK_r}$ in $\mathbb{R}^3$ contains a link with $1$-directional components $Q_1,\dots,Q_n$  where, for every $i \neq j$, $|lk(Q_i,Q_j)| \geq \lambda \geq \alpha$ and $|a_2(Q_i)| \geq \lambda^2/16 \geq \alpha$.

\end{proof}

\end{document}